\newcommand{\mb}{\mathbb}
\newcommand{\mc}{\mathcal}
\newcommand{\eps}{\epsilon}
\newcommand\blfootnote[1]{%
  \begingroup
  \renewcommand\thefootnote{}\footnote{#1}%
  \addtocounter{footnote}{-1}%
  \endgroup
}
\renewcommand{\d}{\partial}
\newcommand{\on}{\operatorname}
\newtheorem{thm}{Theorem}
\newtheorem{lemma}{Lemma}
\newtheorem{cor}{Corollary}
\title{Bounding eigenvalues with packing density}
\author{Neal Coleman}
\begin{document}

\maketitle

\begin{abstract}
We prove a lower bound on the eigenvalues $\lambda_k$, $k\in\mb{N}$, of the Dirichlet Laplacian of a bounded domain $\Omega\subset\mb{R}^n$ of volume $V$:
$$ \lambda_k \geq C_n\bigg( \delta\frac{k}{V}\bigg)^{2/n} $$
where $\delta$ is a constant that measures how efficiently $\Omega$ can be packed into $\mb{R}^n$ and $C_n$ is the constant found in Weyl's law. This generalizes a result of Urakawa in 1984. If $\delta^{2/n} > n/(n+2)$, this bound is stronger than the eigenvalue bound proven by Li and Yau in 1983. For example, in the case of convex planar domains, we have for all $k\in\mb{N}$,
$$ \lambda_k \geq \frac{2\sqrt{3}\pi k}{V}. $$
\end{abstract}

\blfootnote{2010 Mathematics Subject Classification: 35P15}

\blfootnote{The author thanks Iosif Polterovich for bringing the paper of Urakawa to his attention and for further communication. The author thanks Chris Judge, Dylan Thurston, and Sugata Mondal for useful discussion.}

\section{Introduction}

The purpose of this paper is to prove the following theorem:
\begin{thm}\label{main-thm}
Let $\Omega$ be a bounded domain in $\mathbb{R}^n$ with Lipschitz boundary. Let $V$ be the volume of $\Omega$ and $\delta$ be its packing constant. Let $\lambda_k$ be its $k^{th}$ Dirichlet eigenvalue. Then for all $k\in\mb{N}$,
$$ \lambda_k \geq C_n\bigg(\delta\frac{k}{V}\bigg)^{2/n}.$$
\end{thm}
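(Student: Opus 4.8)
The plan is to compare $\Omega$ with a large cube, whose Dirichlet eigenvalues can be bounded from below by an explicit lattice‑point count, and to make that comparison efficient by combining domain monotonicity with the packing hypothesis. Throughout, write $\omega_n$ for the volume of the unit ball in $\mb{R}^n$, so that the Weyl constant is $C_n=(2\pi)^2\omega_n^{-2/n}$; equivalently, Weyl's law reads $N_\Omega(\lambda):=\#\{j:\lambda_j(\Omega)<\lambda\}\sim\omega_n(2\pi)^{-n}V\lambda^{n/2}$.

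\textbf{Step 1 (the cube).} For the open cube $Q_L=(0,L)^n$ I would first establish the exact inequality $\lambda_j(Q_L)\geq C_n(j/L^n)^{2/n}$, valid for every $j\in\mb{N}$ and every $L>0$. The Dirichlet eigenfunctions of $Q_L$ are $\prod_{i=1}^n\sin(\pi m_i x_i/L)$ with $m\in\mb{Z}_{\geq1}^n$ and eigenvalue $\pi^2L^{-2}|m|^2$, so $N_{Q_L}(\lambda)$ equals the number of points of $\mb{Z}_{\geq1}^n$ in the ball of radius $r=\sqrt\lambda\,L/\pi$; assigning to each such point $m$ the half‑open unit cube $\prod_{i=1}^n(m_i-1,m_i]$ exhibits these as pairwise disjoint subsets of $\{x\in\mb{R}^n:x_i\geq0,\ |x|<r\}$, whence $N_{Q_L}(\lambda)\leq 2^{-n}\omega_n r^n=\omega_n(2\pi)^{-n}L^n\lambda^{n/2}$. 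Rearranging this bound produces the claimed lower estimate for $\lambda_j(Q_L)$.

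\textbf{Step 2 (packing and transfer).} Fix $\epsilon>0$. By the definition of the packing constant $\delta$ — together with the standard reduction of the packing density to periodic packings, so that cutting such a packing down to $Q_L$ costs only a boundary layer of size $o(L^n)$ — there is an $L_0$ such that for all $L\geq L_0$ one can place $N=N(L)\geq(\delta-\epsilon)L^n/V$ pairwise disjoint congruent copies $\Omega_1,\dots,\Omega_N$ of $\Omega$ inside $Q_L$. Since Dirichlet eigenvalues are isometry invariant and the spectrum of a finite disjoint union of domains is the union, counted with multiplicity, of the spectra of the pieces, the spectrum of $\Omega_1\sqcup\cdots\sqcup\Omega_N$ is that of $\Omega$ with every eigenvalue repeated $N$ times; in particular its $(kN)$‑th eigenvalue is exactly $\lambda_k(\Omega)$. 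As $\Omega_1\sqcup\cdots\sqcup\Omega_N$ is an open subset of $Q_L$, domain monotonicity of the Dirichlet Laplacian (extension by zero embeds $H_0^1(\Omega_1\sqcup\cdots\sqcup\Omega_N)$ into $H_0^1(Q_L)$, so each min-max value can only decrease), combined with Step 1, gives
$$\lambda_k(\Omega)=\lambda_{kN}(\Omega_1\sqcup\cdots\sqcup\Omega_N)\geq\lambda_{kN}(Q_L)\geq C_n\Big(\frac{kN}{L^n}\Big)^{2/n}\geq C_n\Big(\frac{(\delta-\epsilon)k}{V}\Big)^{2/n}.$$
Letting $\epsilon\to0$ (with $L\to\infty$) yields $\lambda_k(\Omega)\geq C_n(\delta k/V)^{2/n}$, proving the theorem. (Since $\lambda_kV^{2/n}$ is scale invariant, allowing similar rather than merely congruent copies of $\Omega$ in the packing changes nothing, consistently with the scale invariance of the asserted bound.)

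\textbf{Expected main obstacle.} The spectral inputs — isometry invariance, additivity of the spectrum over disjoint unions, domain monotonicity, discreteness of the Dirichlet spectrum — are classical and robust, and the Lipschitz hypothesis is used only to keep $H_0^1(\Omega)$ well behaved (any bounded open set would suffice for these steps). The point that genuinely needs care is the passage, in Step 2, from the packing constant $\delta$ — an asymptotic or supremal notion of density across all of $\mb{R}^n$ — to the concrete statement that $(\delta-\epsilon)L^n/V$ disjoint congruent copies of $\Omega$ actually fit inside the finite cube $Q_L$ for $L$ large, with the copies meeting $\partial Q_L$ (or discarded near it) contributing only a lower‑order error. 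Making this quantitative, in whatever precise formulation of $\delta$ the paper adopts, is the heart of the matter.
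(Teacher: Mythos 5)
Your proposal is correct and takes essentially the same route as the paper: pack congruent copies of $\Omega$ into a large cube, use Dirichlet domain monotonicity together with Polya's bound for the cube, and conclude via the asymptotic count of copies per unit volume given by the packing constant. The differences are cosmetic — you prove the cube estimate by an explicit lattice-point count rather than citing Polya, compare individual eigenvalues instead of counting functions, and the quantitative packing step you rightly flag as the main obstacle is exactly what the paper's Lemma (via Groemer's existence of a density-attaining packing, with cube gauge) supplies.
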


Here is a brief survey of the relevant literature.

Weyl \cite{Weyl1911} proved the asymptotic estimate
\begin{equation}\label{weyl-law}
\lim_{k\to\infty}\frac{\lambda_k}{k^{2/n}} = C_n\bigg(\frac{1}{V}\bigg)^{2/n}  
\end{equation}
as $k\to\infty$. Here $C_n = (2\pi)^2/(\mbox{volume of unit $n$-ball})^{2/n}.$

Polya \cite{Polya1961} considered domains which tile Euclidean space by reflection and translation. He used this tiling property and Weyl's law to prove that, for such domains,
\begin{equation}\label{polya}
\lambda_k \geq C_n\bigg(\frac{k}{V}\bigg)^{2/n} 
\end{equation}
for every $k\in\mb{N}$. He conjectured that this holds for any planar domain. This conjecture is still open.

Urakawa \cite{Urakawa1984} proved a weaker version of Theorem 1, bounding Dirichlet eigenvalues in terms of the \emph{lattice packing constant} of a domain.

Li and Yau \cite{Li1983} proved for an arbitrary domain in $\mb{R}^n$ that
\begin{equation}\label{li-yau} \lambda_k \geq \frac{n}{n+2}C_n \bigg(\frac{k}{V}\bigg)^{2/n} 
\end{equation}
for every $k\in\mb{N}$. They proved this as a corollary of an inequality about the Riesz mean of the eigenvalue sequence, $\frac{1}{k}\sum_{j=1}^k \lambda_j$. Kroger \cite{Kroger1994} proved the corresponding Li-Yau inequality for Neumann eigenvalues. 

Recent improvements to inequality \eqref{li-yau} have proceeded by adding terms to the inequality involving Riesz means of eigenvalues and extending the inequality to more general settings. For details, we refer the reader to \cite{Laptev1997}, \cite{Melas2003}, \cite{Wei2010}, \cite{Geisinger2011}, \cite{Hatzinikitas2013}, \cite{Yolcu2013a}, \cite{Yolcu2013}, \cite{Yolcu2014}, and \cite{Kovarik2015}.

Following Polya's original argument and Urakawa's generalization, we use packings of $\Omega$. By doing so, the packing constant $\delta$ of $\Omega$ enters the inequality. In fact, we are able to replace $n/(    n+2)$ in Li-Yau's estimate with $\delta^{2/n}$. This replacement sacrifices universality, but strengthens the inequality for domains with high packing constant.

\section{Discussion}

Theorem 1 is a generalization of Polya's theorem, as the packing constant of a tiling domain is $1$. Theorem 1 also permits us to replace the factor $n/(n+2)$ in inequality \eqref{li-yau} with the factor $\delta^{2/n}$. In particular, if $\delta > [n/(n+2)]^{n/2}$, then the inequality in Theorem 1 is stronger than inequality \eqref{li-yau}. Such domains are not difficult to construct; for instance, Theorem 1 is stronger than inequality \eqref{li-yau} for any domain in dimension $n\geq 2$ which has a bounding parallelopiped with less than twice the volume of the domain.

General lower bounds for various classes of domain in all dimensions tend to be weak. For instance, a theorem of Minkowski-Hlawka guarantees that the packing constant for a convex, centrally symmetric domain in $\mb{R}^n$ is no less than $\zeta(n)/2^{n-1}$. Schmidt then proved that there is a constant $c$ such that a convex domain in $\mb{R}^n$ has $\delta \geq cn^{3/2}/4^n$. See, for instance, the discussion in \cite{Toth1993}.

However, dimensions $2$ and $3$ are better-studied. As an example, we reproduce a portion of a table from the survey \cite{Bezdek2013}, modifying the last row with information from section 8.4 of the same paper:

\begin{tabular}{|c|c|}
\hline
Body & Lower bound for packing density \\ 
\hline
Unit ball & $\frac{\pi}{\sqrt{18}} = 0.7408\ldots$ \\
Regular octahedron & $\frac{18}{19} = 0.9473\ldots$ \\
Cylinder over a plane domain $K$ & $\delta(K)$ \\
Doubled cone & $\pi\sqrt{6}/9 = 0.855\ldots$ \\
Tetrahedron & $ 0.856\ldots $ \\
\hline
\end{tabular}

Here a cylinder over a plane domain $K$ is the Minkowski sum of $K\times\{0\}$ with a line segment $s$ (which is assumed non-parallel to $K$). Observe that the packing constant of cylinders implies the three-dimensional case of Laptev's proof of Polya's conjecture in \cite{Laptev1997} for products of tiling domains and arbitrary domains.

In \cite{Torquato2009}, the authors provide a survey of known lattice packing constants for Platonic and Archimedean solids. All the Platonic and Archimedean solids have packing densities in excess of $0.5$.

Specializing to $n=2$, Kuperberg-Kuperberg \cite{Kuperberg1990}, later improved by Doheny \cite{Doheny1995}, found lower bounds for packing constants of convex planar domains: 
\begin{thm}
If $\Omega$ is a convex planar domain, then its packing constant is at least $\sqrt{3}/2$.
\end{thm}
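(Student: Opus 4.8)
The plan is to exhibit, for an arbitrary compact planar convex body $K$ with nonempty interior (we may replace $\Omega$ by its closure, which does not change the packing constant), an explicit periodic packing of density at least $\sqrt3/2$. Following Kuperberg and Kuperberg, the packings I would use are \emph{double-lattice packings}: fix a lattice $\Lambda\subset\mb{R}^2$ and a vector $c$, place a translate of $K$ at each point of $\Lambda$, and place a translate of the point-reflection $-K$ at each point of $\Lambda+c$. Since a point-reflection (a half-turn) preserves the Dirichlet spectrum, such packings are admissible for bounding $\delta$, and the density of one equals $2\operatorname{area}(K)$ divided by the area of a fundamental domain of $\Lambda$.

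The first step is a reduction to a statement about hexagons. A double-lattice packing of $K$ of density $d$ exists exactly when $K$ is contained in a (possibly degenerate) convex hexagon $H$ that tiles the plane under the group generated by translations and half-turns — a \emph{p-hexagon} — with $\operatorname{area}(K)\ge d\cdot\operatorname{area}(H)$: in such a tiling the translates of $H$ form a lattice $\Lambda$ and the half-turned copies form a coset $\Lambda+c$, so dropping a copy of $K$ into each ``direct'' tile and a copy of $-K$ into each ``reversed'' tile produces the packing, of density $\operatorname{area}(K)/\operatorname{area}(H)$. It therefore suffices to prove: \emph{every planar convex body $K$ is contained in a p-hexagon $H$ with $\operatorname{area}(H)\le\tfrac{2}{\sqrt3}\operatorname{area}(K)$.}

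To build $H$, I would begin from a longest chord $pq$ of $K$. Since $pq$ is a diameter, $K$ admits supporting lines at $p$ and at $q$ perpendicular to $pq$, so, placing $pq$ on the $x$-axis, $K$ lies in the vertical strip between those two lines and touches each of them. Let two opposite edges of $H$ lie on these lines, and obtain the other four edges from four supporting lines of $K$ — two meeting the part $K^+$ of $K$ above $pq$ and two meeting the part $K^-$ below it. The positions of these four supporting lines are the free parameters; the p-hexagon closure conditions (the relations for a Reinhardt Type-2 tiling: two relations among the six edge lengths and one among the angles) cut this down to a family in which one can still slide the lines. One then selects within this family the configuration that \emph{balances} the four ``corner'' regions of slack lying between $K$ and $\partial H$; existence of a balanced configuration follows from a continuity / intermediate-value argument in the parameters, and in the worst such case the corner regions are forced into an equilateral-triangle configuration, which is exactly where $\operatorname{area}(H)/\operatorname{area}(K)$ attains $\tfrac{2}{\sqrt3}=\sec 30^\circ$.

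The main obstacle — and the genuine content, namely the Kuperberg–Kuperberg theorem itself — is this final estimate: controlling how the area of the four slack regions is distributed and showing that balancing them caps $\operatorname{area}(H)/\operatorname{area}(K)$ at $2/\sqrt3$. One must also treat the degenerate cases in which the extremal hexagon collapses to a pentagon, a quadrilateral, or a triangle (for instance $K$ itself a triangle, where the construction should return density $1$), and handle non-uniqueness of the diameter direction and of the supporting-line choices. Doheny's refinement sharpens the selection of the double lattice to reach the constant that is optimal within this class; the bound $\sqrt3/2$ already follows from the construction above.
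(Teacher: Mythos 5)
First, a point of reference: the paper does not prove this statement at all --- Theorem 2 is quoted from Kuperberg and Kuperberg \cite{Kuperberg1990}, with Doheny's later improvement, so the only meaningful benchmark is the cited proof. Measured against that, your proposal is an outline rather than a proof, and you concede this yourself: the step you defer --- that every planar convex body $K$ is contained in a p-hexagon $H$ with $|H|\le\tfrac{2}{\sqrt{3}}|K|$, to be obtained by ``balancing'' the four slack regions via a continuity argument --- is precisely the content of the theorem. Nothing in your sketch controls the areas of those corner regions, shows that a balanced configuration exists inside the constrained family of supporting lines satisfying the p-hexagon closure conditions, or derives that the extremal case is an ``equilateral-triangle configuration'' giving exactly $\sec 30^\circ$; these are assertions, not arguments. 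The easy half of your reduction is fine (a circumscribed p-hexagon of area at most $\tfrac{2}{\sqrt{3}}|K|$ tiles by translations and half-turns, its tiles receive congruent copies of $K$, and this packing bounds $\delta$ from below by $\sqrt{3}/2$), but the existence of such a hexagon is exactly what must be proved, so as written the argument begs the question. Two smaller issues: your ``exactly when'' equivalence between double-lattice packings and circumscribed p-hexagons is unjustified in the direction you do not use, and the appeal to invariance of the Dirichlet spectrum is beside the point --- what matters for the packing constant is simply that half-turned copies are congruent.

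It is also worth noting that your route is not the one Kuperberg and Kuperberg take; their argument is in a sense dual to yours. Rather than circumscribing a hexagon about $K$, they inscribe in $K$ an \emph{extensive} parallelogram $P$: for a chosen direction they take as one pair of sides the two chords of exactly half the maximal chord length parallel to that direction, show that the double lattice determined by $P$ packs the plane with density $|K|/(2|P|)$, and then prove the area estimate $|P|\le |K|/\sqrt{3}$ for a suitable choice of direction; the bound $\sqrt{3}/2$ follows. Your circumscribed-p-hexagon reformulation may be workable, but it would need its own proof of the $2/\sqrt{3}$ area bound, which is the missing core here. For the purposes of this paper, the honest options are either to cite \cite{Kuperberg1990} (and \cite{Doheny1995}) as the author does, or to reproduce the extensive-parallelogram argument in full, including the degenerate cases you mention only in passing.
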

This gives the following corollary to Theorem 1.
\begin{cor}
Let $\Omega$ be a convex planar domain. Then for all $k$, its Dirichlet eigenvalues satisfy
$$ \lambda_k > 2\sqrt{3}\pi\frac{k}{V}. $$
\end{cor}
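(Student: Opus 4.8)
The plan is to specialize Theorem~\ref{main-thm} to $n=2$ and then feed in the packing estimate of the Kuperberg--Kuperberg/Doheny theorem. First I would record the value of Weyl's constant in the plane: since the unit $2$-ball is the unit disk, of area $\pi$, one has $C_2 = (2\pi)^2/\pi^{2/2} = 4\pi$. With $n=2$ the exponent $2/n$ appearing in Theorem~\ref{main-thm} equals $1$, so the theorem reads
$$ \lambda_k \;\geq\; C_2\,\delta\,\frac{k}{V} \;=\; 4\pi\,\delta\,\frac{k}{V} \qquad\text{for every } k\in\mb{N}. $$

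Next I would invoke the theorem of Kuperberg--Kuperberg, as improved by Doheny: a convex planar domain $\Omega$ has packing constant $\delta \geq \sqrt{3}/2$. Substituting this into the previous display gives
$$ \lambda_k \;\geq\; 4\pi\cdot\frac{\sqrt{3}}{2}\cdot\frac{k}{V} \;=\; 2\sqrt{3}\,\pi\,\frac{k}{V}, $$
which is the asserted bound up to the distinction between $\geq$ and $>$.

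For the strict inequality I would observe that the constant $\sqrt{3}/2$ is not a sharp lower bound for the packing density of a convex planar domain: Doheny's refinement already yields $\delta > \sqrt{3}/2$ for every such $\Omega$, so the substitution above is in fact strict and we conclude $\lambda_k > 2\sqrt{3}\,\pi\,k/V$. (Alternatively, one could note that for a fixed $k$ the inequality of Theorem~\ref{main-thm} is itself strict, since the Polya--Urakawa packing/test-function argument underlying it compares the eigenvalues of $\Omega$ with those of a strictly larger domain; either route suffices.)

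There is essentially no obstacle here beyond the two cited results, which we are entitled to assume; the only genuine content is the evaluation $C_2 = 4\pi$ and the care needed to justify strictness. The one subtlety worth double-checking is that the packing constant $\delta$ as normalized in Theorem~\ref{main-thm} agrees with the normalization used by Kuperberg--Kuperberg and Doheny (namely, a supremum of upper densities of packings by congruent copies of $\Omega$), so that the two estimates may be composed without an extra constant.
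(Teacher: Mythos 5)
Your proof is correct and is essentially the paper's own (implicit) argument: evaluate $C_2 = (2\pi)^2/\pi = 4\pi$, apply Theorem~\ref{main-thm} with $n=2$ so the exponent is $1$, and insert the Kuperberg--Kuperberg/Doheny bound $\delta \geq \sqrt{3}/2$ to get $\lambda_k \geq 2\sqrt{3}\,\pi k/V$. One small caution: your alternative justification of strictness (claiming the inequality in Theorem~\ref{main-thm} is itself strict) is not supported by the paper's limiting/Weyl-law argument, so the strict inequality should rest on Doheny's improvement of the packing bound beyond $\sqrt{3}/2$, as in your primary route.
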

Note that this improves inequality \eqref{li-yau} by a factor of $\sqrt{3}$. (After the author uploaded a first draft of this preprint to the arxiv, Iosif Polterovich informed the author that this result is known to him and Olivier Mercier.)

We also note that in general, packing constants are greater than lattice packing constants. For instance, the regular tetrahedron has a low lattice packing constant and admits non-lattice packings with much, much higher density; see section 8 of \cite{Bezdek2013} for more information. In fact, many domains have high packing constants but relatively low lattice packing constants. This is the case even in $\mathbb{R}^2$; there are families of convex polygons (such as triangles) which tile the plane, but whose lattice packing constants are strictly less than one.

\section{Proof of Result}

Let $\Omega$ be an open, bounded domain in $\mathbb{R}^n$ with volume $V = |\Omega|$. Let $\Delta = -\sum\d_i^2$ denote the Laplace operator in $\Omega$. Denote by
$$0<\lambda_1<\lambda_2\leq\cdots$$ 
the spectrum of the Laplace operator with Dirichlet boundary conditions.

The proof proceeds in two steps. The first step applies Dirichlet domain monotonicity and uses Weyl's law to prove Theorem 1. The second step equates the limit $\lim_{\eps\to 0}N(\eps)\eps^n$ with the packing constant of $\Omega$.

Now we prove Theorem 1.

\begin{proof}Let $\sigma > 0$ be given. Set $G = [-\sigma/2,\sigma/2]^n$. Note that $|G| = \sigma^n$. Denote by $N_G$ and $N_\Omega$ the eigenvalue counting functions of $G$ and $\Omega$, resp.

Let $\mc{P}$ be a maximal packing of $\Omega$. (This exists by work of Groemer, \cite{Groemer1986}.) Let $N(\sigma)$ be the number of components of $\mc{P}$ contained within $\Omega$. By Dirichlet domain monotonicity, for every $x$,
$$ \sum_{A\in\mc{P}, A\subset G} N_A(x) \leq N_G(x). $$

Since $G$ tiles $\mb{R}^n$, by Polya's theorem, inequality \eqref{polya}, we have 
$$ N_\Omega(x) \leq \frac{N_G(x)}{N(\sigma)} \leq C_n \frac{\sigma^n}{N(\sigma)} x^{n/2}. $$
This is true for every $\sigma > 0$. Letting $\sigma\to\infty$ and using the lemma proved below, 
$$ N_\Omega(x) \leq C_n \frac{V}{\delta} x^{n/2} $$
for all $x$. Equivalently,
$$ \lambda_k(\Omega) \geq C_n \bigg( \delta \frac{k}{V} \bigg)^{2/n} $$
for all $k$ (where $C_n$ is a different constant depending only on dimension).

This completes the proof. 
\end{proof}

\begin{lemma}
Let $\Omega$ be a bounded domain in $\mathbb{R}^n$. Let $\mc{P}$ be a maximum-density packing of $\Omega$. For $\sigma > 0$, let $N(\sigma)$ be the number of components of $\mc{P}$ entirely contained in $[-\sigma/2,\sigma/2]^n$. Then
$$ \lim_{\sigma\to\infty} \frac{\sigma^n}{N(\sigma)} = \frac{V}{\delta}. $$
\end{lemma}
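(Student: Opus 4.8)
The plan is to show that the quantity $N(\sigma)/\sigma^n$ converges to $\delta/V$, which is exactly the statement after reciprocating. The key observation is that $\mc{P}$ is a \emph{maximum-density} packing of $\Omega$, so by definition of the packing constant $\delta$, the fraction of $\R^n$ covered by the translated copies of $\Omega$ in $\mc{P}$ tends to $\delta$ in the appropriate Besicovitch/density sense. Concretely, for a large cube $Q_\sigma = [-\sigma/2,\sigma/2]^n$, let $M(\sigma)$ be the number of components of $\mc{P}$ that \emph{intersect} $Q_\sigma$ (as opposed to $N(\sigma)$, the number contained in $Q_\sigma$). Each component has volume $V$, the components are pairwise disjoint, and by maximality their union has density $\delta$; hence $\lim_{\sigma\to\infty} M(\sigma) V / \sigma^n = \delta$, i.e. $\lim M(\sigma)/\sigma^n = \delta/V$.

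The first step, then, is to make the density statement precise: since $\Omega$ is bounded, say $\diam(\Omega) \le D$, every component of $\mc{P}$ that meets $Q_\sigma$ is contained in $Q_{\sigma + 2D}$, and every component contained in $Q_{\sigma - 2D}$ certainly meets $Q_\sigma$. This gives the sandwich
$$ N(\sigma - 2D) \le M(\sigma) \le N(\sigma + 2D). $$
The second step is to control $M(\sigma)$ directly: the disjoint copies meeting $Q_\sigma$ all lie in $Q_{\sigma+2D}$, so $M(\sigma) V \le (\sigma + 2D)^n$; and because $\mc{P}$ achieves packing density $\delta$, the total volume of copies meeting $Q_\sigma$ is at least $\delta \cdot \sigma^n - o(\sigma^n)$ (boundary effects being lower order), so $M(\sigma) V \ge \delta \sigma^n - o(\sigma^n)$. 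Dividing by $\sigma^n$ and letting $\sigma \to \infty$ yields $M(\sigma)/\sigma^n \to \delta/V$. The third step combines this with the sandwich inequality above: since $(\sigma \pm 2D)^n / \sigma^n \to 1$, both $N(\sigma-2D)/\sigma^n$ and $N(\sigma+2D)/\sigma^n$ are squeezed to $\delta/V$, hence $N(\sigma)/\sigma^n \to \delta/V$, and taking reciprocals gives the claim.

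The main obstacle is the precise justification that a maximum-density packing actually realizes the density $\delta$ \emph{locally}, i.e. on large cubes, rather than merely in some limsup sense over a cleverly chosen exhaustion. This is where Groemer's existence result (cited in the proof above) is doing real work: it guarantees not just that the supremal density $\delta$ is attained by some packing, but that the density can be computed as a genuine limit over cubes $Q_\sigma$ as $\sigma \to \infty$, uniformly enough that the $o(\sigma^n)$ boundary corrections are controlled. Once that regularity of the maximal packing is in hand, everything else is the elementary volume-counting and squeezing sketched above. I would therefore structure the write-up so that the density-realization property is quoted cleanly as the input, and the remainder is the two-sided cube comparison.
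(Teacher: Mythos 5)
Your overall route is the same as the paper's: everything rests on Groemer's theorem that a maximum-density packing realizes $\delta$ as a genuine limit (inner density $=$ outer density $=\delta$ for every gauge), after which the lemma is cube-by-cube volume counting. The paper is more direct than your write-up: choosing the gauge $([-1/2,1/2]^n,0)$ and writing $Q_\sigma=[-\sigma/2,\sigma/2]^n$, the inner density of $\mc{P}$ in $Q_\sigma$ is by definition $\frac{1}{\sigma^n}\sum_{A\in\mc{P},\,A\subset Q_\sigma}|A| = V N(\sigma)/\sigma^n$, so Groemer's theorem \emph{is} the statement $VN(\sigma)/\sigma^n\to\delta$, and the lemma follows by taking reciprocals; no $M(\sigma)$ or diameter sandwich is needed.

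One step of your argument does not stand as written: in your second step you bound $M(\sigma)V$ above only by $(\sigma+2D)^n$ and below by $\delta\sigma^n-o(\sigma^n)$, which after dividing by $\sigma^n$ gives $\delta\le\liminf M(\sigma)V/\sigma^n$ and $\limsup M(\sigma)V/\sigma^n\le 1$; this yields the claimed limit only when $\delta=1$. You need the matching upper bound $\limsup M(\sigma)V/\sigma^n\le\delta$. It is true and easy to supply: every copy meeting $Q_\sigma$ is contained in $Q_{\sigma+2D}$, so $M(\sigma)V$ is at most the total volume of copies contained in $Q_{\sigma+2D}$, i.e.\ at most $d_{\on{inn}}(\mc{P}|Q_{\sigma+2D})\,(\sigma+2D)^n$, and this inner density tends to $\delta$ by the very Groemer property you quote (alternatively, the outer density of \emph{any} packing by copies of $\Omega$ has $\limsup$ at most $\delta$, directly from the definition of $\delta$ as a supremum of outer densities). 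With that bound inserted, your sandwich $N(\sigma-2D)\le M(\sigma)\le N(\sigma+2D)$ and the squeeze go through. A minor point: the packing consists of congruent copies (rigid motions), not only translates, though this does not affect the counting.
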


\begin{proof}
For a more thorough summary of basic concepts in packing, we refer the reader to \cite{Toth1993}, section 2, and to \cite{Groemer1986}. 

A packing $\mc{P}$ of $\mb{R}^n$ by $\Omega$ is a collection pairwise disjoint congruent copies of $\Omega$. If $G$ is a domain, define the inner density $d_{\on{inn}}$ and outer density $d_{\on{out}}$ with respect to $G$ to be
$$ d_{\on{inn}}(\mc{P}|G) = \frac{1}{|G|}\sum_{A\in\mc{P},A\subset G}|A| $$
$$ d_{\on{out}}(\mc{P}|G) = \frac{1}{|G|}\sum_{A\in\mc{P},A\subset G}|A| $$

We define the inner (resp. outer) densities of $\mc{P}$ with respect to the gauge $G$ as
$$ d_-(\mc{P},G,o) = \liminf_{\lambda\to\infty} d_{\on{inn}}(\mc{P}|\lambda G) $$
$$ d_+(\mc{P},G,o) = \limsup_{\lambda\to\infty} d_{\on{out}}(\mc{P}|\lambda G) $$
where $\lambda G$ is the image of $G$ under a homothety of scale $\lambda$ fixing $o$.

Call $(G,o)$ a gauge for the density. Then the packing density $\delta$ of $\Omega$ is defined to be the supremum of the outer densities $d_+(\mc{P},G,o)$ over all packings of $\Omega$ and all choices of gauge $(G,o)$.

According to a theorem of Groemer \cite{Groemer1986}, c.f. also section 2 of \cite{Toth1993}, for every compact domain $\Omega$ in $\mb{R}^n$, there exists a packing $\mc{P}$ by congruent copies of $\Omega$ such that 
$$ d_+(\mc{P},G,o) = d_-(\mc{P},G,o) = \delta $$
for every gauge $(G,o)$.

Therefore we may choose a suitable gauge pair: $([-1/2,1/2]^n,0)$. Then
$$ \delta = \lim_{\lambda\to\infty} d_{\on{inn}}(\mc{P},\lambda G) = \lim_{\sigma\to\infty} \frac{1}{\sigma^n V}\sum_{A\in\mc{P},A\subset\lambda\Omega}|A|. $$
In view of the fact that every $A$ is a copy of $\Omega$, we have
$$ \delta = \lim_{\sigma\to\infty} \sigma^{-n}\sum_{A\in\mc{P},A\subset\lambda\Omega} V = \lim_{\sigma\to\infty} \frac{V N(\sigma)}{\sigma^n}. $$

Therefore, 
$$ \lim_{\sigma\to\infty} \frac{\sigma^n}{N(\sigma)} = \frac{V}{\delta}. $$
\end{proof}

\bibliographystyle{plain}
\bibliography{packing_refs}

\end{document}